\newtheorem{theor}{Theorem}
\newtheorem{lem}{Lemma}[section]
\newtheorem{prop}[lem]{Proposition}
\newtheorem{rem}{Remark}[section]
\numberwithin{equation}{section}
\def\bea{\begin{eqnarray}}
\def\eea{\end{eqnarray}}
\def\beq{\begin{equation}}
\def\eeq{\end{equation}}
\def\be{\beq\begin{array}{c}}
\def\ee{\end{array}\eeq}
\def\bse{\begin{subequations}}
\def\ese{\end{subequations}}
\def\pbea{\begin{eqnarray*}}
\def\peea{\end{eqnarray*}}
\def\pbeq{\begin{equation*}}
\def\peeq{\end{equation*}}
\def\pbe{\pbeq\begin{array}{c}}
\def\pee{\end{array}\peeq}
\def\pbse{\begin{subequations*}}
\def\pese{\end{subequations*}}
\def\a{\alpha}
\def\d{\delta}
\def\b{\beta}
\def\e{\varepsilon}
\def\D{\Delta}
\def\phi{\varphi}
\def\sltwo{U_q({\widehat \slg_2})}
\def\advadva{U_q(A_2^{(2)})}
\def\ACh{\Ac_{\mathcal Ch}}
\def\AD{\Ac_\Dc}
\def\AR{\Ac_\Rc}
\def\rf#1{~\!\!(\ref{#1})}
\def\rfs#1{~\!\!\ref{#1}}
\def\RR{{\mathcal R}}
\DeclareMathOperator*{\Sym}{\rm Sym}
\newenvironment{proof}{{\flushleft\it Proof:}} {\hfill$\square$ \\}
\begin{document}

\bigskip
\hfill{ITEP-TH-65/09}

\begin{center}
{\Large\bf Three realizations of quantum \\ affine algebra $\advadva$}
\end{center}
\bigskip
\begin{center}
{\bf
A. Shapiro$^\star$\footnote{E-mail: alexander.m.shapiro@gmail.com}}\\
\bigskip
{$^\star$\it Institute of Theoretical \& Experimental Physics, 117259
Moscow, Russia}\\
\bigskip
\bigskip

\end{center}

\begin{abstract}
We establish explicit isomorphisms between three realizations of the quantum twisted affine algebra $\advadva$: the Drinfeld (``current'') realization, the Chevalley realization, and the so-called $RLL$ realization, investigated by Faddeev, Reshetikhin and Takhtajan.
\end{abstract}

\section{Introduction}

There exist just two quantum affine algebras of rank $2$: $\sltwo$ and $\advadva$. The algebra $\sltwo$ and its representation theory are both very well studied. The situation with $\advadva$ is somewhat different, although the algebra itself and its representation are interesting from the standpoints of both mathematics and mathematical physics.

The algebra $\advadva$ first appeared in works on physics. The fundamental representation of the $\Rc$-matrix of $\advadva$ was obtained in \cite{IK} as the $\Rc$-matrix of the quantum version of the Shabat-Mikhailov model, also known as the Izergin-Korepin model. The same representation was later obtained in \cite{J} among fundamental representations of $\Rc$-matrices of other nonexceptional affine Lie algebras. The Bethe ansatz technique was later extended onto $\advadva$ in \cite{T}. Finally, the ideas of the thermodynamic Bethe ansatz for $\advadva$ were developed in \cite{FRS}, and some finite dimensional representations were obtained there.

The algebra $\advadva$ was also investigated from an algebraic standpoint. In \cite{KT}, the Cartan-Weyl basis for $\advadva$ was established, and the universal $\Rc$-matrix was written in terms of infinite products of elements of the Cartan-Weyl basis. A classification of finite dimensional representations of algebra $\advadva$ was obtained in \cite{CP} by means of the Drinfeld polynomials. Later, an integral formula for the universal $\Rc$-matrix for $\advadva$ appeared in \cite{DK}, where $\advadva$ was treated as a topological Hopf algebra with the Drinfeld comultiplication. In the same work, the Serre relations in $\advadva$ were represented in terms of zeros and poles of products of the Drinfeld currents. Finally, integral representations for factors of the universal $\Rc$-matrix for $\advadva$ were derived in \cite{KS}, where $\advadva$ was endowed with the standard Hopf structure.

Quantum affine algebras allow three different realizations with different Hopf structures. The first one is the ``standard'' realization given by the Chevalley generators and relations determined by the corresponding Cartan matrix. The standard realization has a small number of generators, but is unfortunately very difficult to use in applications. The second realization, the Drinfeld ``new realization'', was first established in \cite{D} by means of generating functions (the Drinfeld ``currents'') and relations on them. The Drinfeld realization allows using methods of complex analysis. Moreover, it makes it possible to give the classification of the finite dimensional representations of the quantum affine algebra. Finally, the third one is the $RLL$ realization, based on the Faddeev-Reshetikhin-Takhtajan-Semenov-Tian-Shansky approach, where generators are combined into $L$-operators, satisfying the famous Yang-Baxter equation (see \cite{FRT}, \cite{RS}). The simplicity of comultiplication in the $RLL$ realization allows to construct new representations as tensor products of already known ones. The $RLL$ realization is therefore widely used in physical models.

It is universally acknowledged that the three realizations are isomorphic, although precise proofs hardly exist for any algebra other then of the $U_q(\widehat sl_n)$ type. For $U_q(\widehat sl_n)$, an isomorphism between the standard and the Drinfeld realizations was established in \cite{DF}, whereas links between the Drinfeld and the $RLL$ approaches were studied in \cite{DK2}.

In the case of quantum twisted affine algebras, the $RLL$ realization requires additional relations. Although it is believed that the three realizations are also isomorphic for the twisted algebras, there is no full understanding of what the exact isomorphism should look like. An isomorphism between the standard and the Drinfeld realizations for $\advadva$ was established in \cite{KS}, and an isomorphism between the $RLL$ and the Drinfeld realizations was partly derived in \cite{YZ}. A more complete bibliography can be found in \cite{H}.

In this work, we obtain a full description of the three realizations of algebra $\advadva$ (without grading element and with zero central charge), the isomorphisms between them, and the links between the three Hopf structures. We also pay special attention to the following fact: each realization has a ``minimal'' set of generators (or almost minimal in the twisted case) and an extended set of generators. These two sets should be linked by an analogue of the PBW-theorem if there exists one. In the standard realization, these sets are the Chevalley generators and the Cartan-Weyl basis. In the Drinfeld realization, they are the Drinfeld currents and the so-called ``composite currents'' (see \cite{DK3}). Finally, in the $RLL$-realization, the Gaussian coordinates immediately above or below the diagonal form the ``minimal'' set, and all the Gaussian coordinates form the extended set. We note that these extended sets of generators are crucial for calculations of the universal weight function. Here, we obtain a link between projections of the composite currents and the Gaussian coordinates for $\advadva$ as was done for $U_q(\widehat\glg_n)$ in \cite{KP}.

The paper is organized as follows. In the first section we introduce the three realizations. In Section $2$, we obtain the isomorphisms between them. In Section $3$, we represent elements of the extended set of Gaussian coordinates in terms of composite currents.

\section{Realizations of $\advadva$}

{\sl For simplicity of exposition, we consider the algebra $\advadva$ without grading element and with zero central charge. All the statements also hold for an algebra with grading element and arbitrary central charge, but the formulas become more involved.}

\subsection{Drinfeld realization}

Let $\AD$ be the associative algebra generated by elements
$$
e_n, f_n,\;\, n\in\Z,\quad a_n,\;\, n\in\Z\setminus\hc{0},\quad {\rm and} \quad k^{\pm 1},
$$
subject to certain commutation relations. The relations are given as formal power series identities for generating functions (currents)
$$
e(z)=\sum_{k\in\Z}e_kz^{-k},\quad f(z)=\sum_{k\in\Z}f_kz^{-k},
\quad K^\pm(z)=k^{\pm 1}\exp\left(\pm(q-q^{-1})\sum_{n>0}a_{\pm n}z^{\mp n}\right)
$$
as follows:
\pbea
(z-q^2w)(qz+w)e(z)e(w) &=& (q^2z-w)(z+qw)e(w)e(z),                           \\
(q^2z-w)(z+qw)f(z)f(w) &=& (z-q^2w)(qz+w)f(w)f(z),                           \\
K^+(z)e(w)K^+(z)^{-1} &=& \a(w/z)e(w),                                       \\
K^+(z)f(w)K^+(z)^{-1} &=& \a(w/z)^{-1}f(w),                                  \\
K^-(z)e(w)K^-(z)^{-1} &=& \a(z/w)^{-1}e(w),                                  \\
K^-(z)f(w)K^-(z)^{-1} &=& \a(z/w)f(w),                                       \\
K^\pm(z)K^\pm(w) &=& K^\pm(w)K^\pm(z),                                       \\
K^-(z)K^+(w) &=& K^+(w)K^-(z),                                               \\
e(z)f(w)-f(w)e(z) &=& \frac1{q-q^{-1}}\hr{\d(z/w)K^+(w)-\d(z/w)K^-(z)}.
\peea
where
$$
\a(x)=\dfrac{(q^2-x)(q^{-1}+x)}{(1-q^2x)(1+q^{-1}x)},
$$
and $\d\left(\dfrac zw\right)$ is a formal Laurent series, given by
$$
\d(z/w) = \sum\limits_{n\in\Z}(z/w)^n.
$$

The generating functions $e(z)$ and $f(z)$ also satisfy the cubic Serre relations (see \cite{D}):
\pbea
\Sym_{z_1,z_2,z_3} \hr{q^{-3}z_1 - (q^{-2}+q^{-1})z_2 + z_3} e(z_1)e(z_2)e(z_3) &=& 0, \\
\Sym_{z_1,z_2,z_3} \hr{q^{-3}z_1^{-1} - (q^{-2}+q^{-1})z_2^{-1} + z_3^{-1}} f(z_1)f(z_2)f(z_3) &=& 0, \\
\Sym_{z_1,z_2,z_3} \hr{q^3 z_1^{-1} - (q^{2}+q^{})z_2^{-1} + z_3^{-1}} e(z_1)e(z_2)e(z_3) &=& 0, \\
\Sym_{z_1,z_2,z_3} \hr{q^3 z_1 - (q^{2}+q^{})z_2 + z_3} f(z_1)f(z_2)f(z_3) &=& 0.
\peea

The Hopf algebra structure on $\AD$ can be defined as follows:
\pbea
\D_\Dc(e(z)) &=& e(z)\otimes 1+K^-(z)\otimes e(z), \\
\D_\Dc(f(z)) &=& 1\otimes f(z)+f(z)\otimes K^+(z), \\
\D_\Dc(K^\pm(z)) &=& K^\pm(z)\otimes K^\pm(z),     \\
S_\Dc(e(z)) &=& -(K^-(z))^{-1}e(z),                \\
S_\Dc(f(z)) &=& -f(z)(K^+(z))^{-1},                \\
S_\Dc(K^\pm(z)) &=& (K^\pm(z))^{-1},               \\
\e_\Dc(e(z)) &=& 0,                                \\
\e_\Dc(f(z)) &=& 0,                                \\
\e_\Dc(K^\pm(z)) &=& 1.
\peea
where $\D_\Dc$, $\e_\Dc$ and $S_\Dc$ are the comultiplication, the counit and the antipode maps, respectively. We call $\D_\Dc$ a \emph{Drinfeld comultiplication}. Here, we must note that $\AD$ is a topological bialgebra and $\D_\Dc$ is a map from $\AD$ to a topological extension of its tensor square (see \cite[Section 2]{EKP} for details).

\subsection{Chevalley realization}

Another realization of $\advadva$ is given in terms of Chevalley generators. Let the algebra $\ACh$ be the associative algebra generated by elements $e_{\pm\a}$, $e_{\pm(\d-2\a)}$, $k^{\pm1}_{\a}$, $k^{\pm1}_{\d-2\a}$, satisfying the following relations:
\pbea
k_{\a}e_{\pm\a}k^{-1}_{\a} = q^{\pm1}e_{\pm\a}, &\quad&
k_{\a}e_{\pm(\d-2\a)}k^{-1}_{\a} = q^{\mp 2}e_{\pm(\d-2\a)},
\\
k_{\d-2\a}e_{\pm{\a}}k^{-1}_{\d-2\a} = q^{\mp 2}e_{\pm{\a}}, &\quad&
k_{\d-2\a}e_{\pm{(\d-2\a)}}k^{-1}_{\d-2\a} = q^{\pm4}e_{\pm{(\d-2\a)}},
\\
k_{\a}^2 k_{\d-2\a} = 1, &\quad&
\hs{e_{\pm\a},e_{\mp(\d-2\a)}} = 0,
\\
\hs{e_\a,e_{-\a}} = \dfrac{k_\a-k_\a^{-1}}{q-q^{-1}}, &\quad&
\hs{e_{\d-2\a},e_{-(\d-2\a)}} = \dfrac{k_{\d-2\a}-k_{\d-2\a}^{-1}}{q-q^{-1}},
\\
(\ad_{q}e_{\pm\a})^5 e_{\pm(\d-2\a)} = 0, &\quad&
(\ad_{q}e_{\pm(\d-2\a)})^2 e_{\pm\a} = 0,
\peea
where
\pbea
(\ad_{q}e_{\pm\a})(x) &=& e_{\pm\a}x - k_\a^{\pm 1}xk_\a^{\mp 1}e_{\pm\a},
\\
(\ad_{q}e_{\pm(\d-2\a)})(x) &=& e_{\pm(\d-2\a)}x - k_{\d-2\a}^{\pm 1}xk_{\d-2\a}^{\mp 1}e_{\pm(\d-2\a)}.
\peea

The Hopf algebra structure associated with this realization can be defined by
\pbea
\D(e_{\a})=e_{\a}\otimes 1+k_{\a}\otimes e_{\a},
&\quad&
\D(e_{\d-2\a})=e_{\d-2\a}\otimes 1+k_{\d-2\a}\otimes e_{\d-2\a},
\\
\D(e_{-\a})=1\otimes e_{-\a}+e_{-\a}\otimes k^{-1}_{\a},
&\quad&
\D(e_{-(\d-2\a)})=1\otimes e_{-(\d-2\a)}+e_{-(\d-2\a)}\otimes k^{-1}_{\d-2\a},
\\
\D(k_{\a})\,=\,k_{\a}\otimes k_{\a},
&\quad&
\D(k_{\d-2\a})\,=\,k_{\d-2\a}\otimes k_{\d-2\a},
\\
\e(e_{\pm\a})=0,
&\quad&
\e(e_{\pm(\d-2\a)})=0,
\\
\e(k^{\pm1}_{\a})=1,
&\quad&
\e(k^{\pm1}_{\d-2\a})=1,
\\
S(e_{\a})=-k^{-1}_{\a}e_{\a},
&\quad&
S(e_{\d-2\a})=-k^{-1}_{\d-2\a}e_{\d-2\a},
\\
S(e_{-\a})=-e_{-\a}k_{\a},
&\quad&
S(e_{-(\d-2\a)})=-e_{-(\d-2\a)}k_{\d-2\a},
\\
S(k^{\pm1}_{\a}) =  k^{\mp1}_{\a},
&\quad&
S(k^{\pm1}_{\d-2\a}) =  k^{\mp1}_{\d-2\a},
\peea
where $\D$, $\e$ and $S$ are the comultiplication, the counit and the antipode maps, respectively. We call $\D$ the \emph{standard comultiplication}. We also consider the opposite comultiplication
$$
\D^{op}=\si\circ\D, \qquad\text{where}\qquad \si(u\otimes v) = v\otimes u.
$$

Now, let us recall the construction of the Cartan-Weyl basis, obtained for $\advadva$ in \cite{KT}\footnote{To adapt the results of \cite{KT} to our case, one should replace $q$-commutators with $q^{-1}$-commutators in the construction of the Cartan-Weyl basis.}. The twisted affine algebra $A_2^{(2)}$ has the root system $\D=\D_+\cup\D_-$, where
$$
\D_+ = \a\cup\hc{n\d,\;\pm\a+n\d,\;\pm2\a+(2n-1)\d\;|\;n\in\N}, \qquad \text{and} \qquad \D_- = -\D_+.
$$
The roots $\ga\in\D_+$ are called \emph{positive}, the roots $\ga\in\hc{n\d,\;n\in\Z}$ are called \emph{imaginary}, and the roots $\ga\in\D\setminus\hc{n\d,\;n\in\Z}$ are called \emph{real}. We define a scalar product on $\D$ by
$$
\hr{\a,\a} = 1, \qquad \hr{\a,\d} = \hr{\d,\d} = 0.
$$
Now, let
$$
q_\ga = q^{(\ga,\ga)}, \qquad (a)_q = \frac{q^a-1}{q-1}, \qquad (n)_q! = (1)_q\cdot\dots\cdot(n)_q,
$$
and
$$
\exp_q(x) = \sums{n\ge0}\frac{x^n}{(n)_q!} = 1+x+\frac{x^2}{(2)_q!}+\dots\,.
$$

We fix the normal ordering on $\D$
$$
\a,\; 2\a+\d,\; \a+\d, \; 2\a+3\d, \; \a+2\d, \; \dots,
\; \d, \; 2\d, \; 3\d, \; \dots, \;
2\d-\a, \; 3\d-2\a, \; \d-\a, \; \d-2\a.
$$
In accordance with the procedure of the construction of the Cartan-Weyl basis described in \cite{KT}, we set
\beq
\begin{split}
&e_{\d-\a}=a[e_{\a},e_{\d-2\a}]_{q^{-1}}, \qquad
e'_{\d}=b[e_{\a},e_{\d-\a}]_{q^{-1}}, \qquad
e_{\a+n\d}=b[e_{\a+(n-1)\d},e'_{\d}]_{q^{-1}}, \\
&e_{\d-\a+n\d}=b[e'_{\d},e_{\d-\a+(n-1)\d}]_{q^{-1}}, \qquad
e'_{n\d}=b[e_{\a+(n-1)\d},e_{\d-\a}]_{q^{-1}}, \\
&e_{2\a+(2n+1)\d}=a[e_{\a+n\d},e_{\a+(n+1)\d}]_{q^{-1}}, \qquad
e_{\d-2\a+2n\d}=a[e_{\d-\a+n\d}, e_{\d-\a+(n-1)\d}]_{q^{-1}},
\end{split}
\eeq
where
$$ \label{ab}
a=\frac{1}{\sqrt{q+q^{-1}}}, \qquad b=\frac{1}{\sqrt{q+1+q^{-1}}},
$$
and
$$
[e_\b,e_\ga]_{q^{-1}} = e_\b e_\ga-q^{-\hr{\b,\ga}}e_\ga e_\b.
$$
holds for all $\b, \ga \in \D$. Finally, we define \emph{imaginary} roots $e_{n\d}$ by means of the Schur polynomials:
\beq
e'_{n\d}=\sum_{p_{1}+2p_{2}+\dots +np_{n}=n}
\frac{\hr{(q-q^{-1})b^{-1}}^{\sum p_{i}-1}}{p_{1}!\ldots p_{n}!}
e_{\d}^{p_1}e_{2\d}^{p_2}\dots e_{n\d}^{p_n}.
\eeq
Monomials in the elements $e_{n\d}$, $e_{n\d\pm\a}$ and $k_\a^{\pm1}$ form a linear basis of $\ACh$.

\subsection{$RLL$ realization}

Let
\beq \label{R}
R(x) =
\begin{pmatrix}
1 & 0 & 0 & 0 & 0 & 0 & 0 & 0 & 0 \\
0 & a & 0 & d & 0 & 0 & 0 & 0 & 0 \\
0 & 0 & b & 0 & e & 0 & p & 0 & 0 \\
0 & f & 0 & a & 0 & 0 & 0 & 0 & 0 \\
0 & 0 & g & 0 & c & 0 & e & 0 & 0 \\
0 & 0 & 0 & 0 & 0 & a & 0 & d & 0 \\
0 & 0 & q & 0 & g & 0 & b & 0 & 0 \\
0 & 0 & 0 & 0 & 0 & f & 0 & a & 0 \\
0 & 0 & 0 & 0 & 0 & 0 & 0 & 0 & 1
\end{pmatrix},
\eeq
where
\begin{align*}
a &= \frac{q(x-1)}{q^2x-1},         & b &= \frac{q^2(qx+1)(x-1)}{(q^2x-1)(q^3x+1)},       &
c &= \frac{q(x-1)}{q^2x-1} + \frac{(q^2-1)(q^3+1)x}{(q^2x-1)(q^3x+1)},                    \\
d &= \frac{q^2-1}{(q^2x-1)},        & e &= \frac{(q^2-1)(x-1)q^{1/2}}{(q^2x-1)(q^3x+1)},  &
p &= \frac{(q^2-1)(q^3x+qx-q+1)}{(q^2x-1)(q^3x+1)},                                       \\
f &= \frac{(q^2-1)x}{(q^2x-1)},     & g &= \frac{(1-q^2)x(x-1)q^{5/2}}{(q^2x-1)(q^3x+1)}, &
q &= \frac{(q^2-1)x(q^3x-q^2x+q^2+1)}{(q^2x-1)(q^3x+1)}.
\end{align*}
We consider the generating functions
\pbeq
\begin{split}
l_{ij}^\pm(z) = \suml{n=0}{\infty}l_{ij}^\pm[\pm n]z^{\mp n}, &\qquad 1 \le i,j \le 3 \\
l_{ij}^+[0]=l_{ji}^-[0] = 0, &\qquad 1\le i<j \le 3
\end{split}
\peeq
and the shifted generating functions
$$
\tilde l_{ij}^\pm(z) = q^{\frac12(i-j)} l_{4-j,4-i}^\pm(-q^{-3}z).
$$
We define the \emph{$L$-operators} $L^\pm(z)$ and their shifts $\tilde L^\pm(z)$ as
$$
L^\pm(z) = \hr{l_{ij}^\pm(z)}_{i,j=1}^3, \qquad \tilde L^\pm(z) = \hr{\tilde l_{ij}^\pm(z)}_{i,j=1}^3.
$$
Finally, we recall the notion of $q$-determinant:
$$
\det\nolimits_q(L^\pm(z)) =
\sums{\tau\in \Sg_3}(-q)^{\sgn(\tau)}l^\pm_{1\tau(1)}(z)l^\pm_{2\tau(2)}(q^2z)l^\pm_{3\tau(3)}(q^4z),
$$
where $\Sg_n$ is the permutation group on $n$ elements.

Define $\AR$ as the associative algebra with the generators $l_{ij}^\pm[\pm n],\,n\in\N,\,1\le i,j \le 3$ and $l_{ij}^+[0]$, $l_{ji}^-[0]$ for $1\le j\le i\le 3$, subject to the following relations:
\beq \label{q-det}
\det\nolimits_q(L^\pm(z)) = 1,
\eeq
\beq \label{symmetry}
L^\pm(z)\tilde L^\pm(z) = I_3,
\eeq
\beq \label{pm_identity}
l_{ii}^+[0]l_{ii}^-[0] = l_{ii}^-[0]l_{ii}^+[0] = 1,
\eeq
\beq
\begin{split} \label{RLL}
R\hr{\frac zw}L_1^\pm(z)L_2^\pm(w) &= L_2^\pm(w)L_1^\pm(z)R\hr{\frac zw}, \\
R\hr{\frac zw}L_1^+(z)L_2^-(w) &= L_2^-(w)L_1^+(z)R\hr{\frac zw}.
\end{split}
\eeq
where $L_1(z) = L(z)\otimes I_3$, $L_2(z) = I_3 \otimes L(z)$, and $I_n$ denotes the $n\times n$ identity matrix.

The Hopf algebra structure on $\AR$ is given by
$$
\D_R(l_{ij}^\pm) = \suml{k=1}{3}l_{ik}^\pm \otimes l_{kj}^\pm, \qquad
S_R(L^\pm) = (L^\pm)^{-1}, \qquad
\e_R(L^\pm) = I_3,
$$
where $\D_R$, $S_R$ and $\e_R$ are the comultiplication, the antipode and the counit maps, respectively.

The $L$-operators admit the Gaussian decomposition
\beq \label{Gauss}
L^\pm(z) =
\begin{pmatrix}
1 & f^\pm_1(z) & f^\pm_{13}(z) \\
0 & 1          & f^\pm_2(z)    \\
0 & 0          & 1
\end{pmatrix}
\begin{pmatrix}
k^\pm_1(z) & 0          & 0          \\
0          & k^\pm_2(z) & 0          \\
0          & 0          & k^\pm_3(z)
\end{pmatrix}
\begin{pmatrix}
1             & 0          & 0 \\
e^\pm_1(z)    & 1          & 0 \\
e^\pm_{31}(z) & e^\pm_2(z) & 1
\end{pmatrix},
\eeq
where
\begin{align*}
e^\pm_i(z) &= \sums{n\ge0}e^\pm_i[n]z^{\mp n},\quad i=1,2,   &e^\pm_{31}(z) &= \sums{n\ge0}e^\pm_{31}[n]z^{\mp n}, \\
f^\pm_i(z) &= \sums{n\ge0}f^\pm_i[n]z^{\mp n},\quad i=1,2,   &f^\pm_{13}(z) &= \sums{n\ge0}f^\pm_{13}[n]z^{\mp n}, \\
k^\pm_i(z) &= \sums{n\ge0}k_i^\pm[n]z^{\mp n},\quad i=1,2,3.
\end{align*}
and
$$
e^-_1[0] = e^-_2[0] = e^-_{31}[0] = 0, \qquad f^+_1[0] = f^+_2[0] = f^+_{13}[0] = 0.
$$
We have thus defined the Gaussian generators
$$
f^\pm_1[n],\;f^\pm_2[n],\;f^\pm_{13}[n], \qquad
e^\pm_1[n],\;e^\pm_2[n],\;e^\pm_{31}[n], \qquad
k^\pm_1[n],\;k^\pm_2[n],\;k^\pm_3[n], \quad n\ge0,
$$
subject to relations \rf{q-det} -- \rf{RLL}. Here $f^\pm_1[n],\;f^\pm_2[n],\;e^\pm_1[n],\;e^\pm_2[n]$ form the ``minimal'' set of generators, while $f^\pm_{13}[n]$ and $e^\pm_{31}[n]$ form the extended one.

\section{Links between realizations}

\subsection{Universal $\Rc$-matrix}

Recall that the universal $\Rc$-matrix of a quasitriangular Hopf algebra $\Ac$ is an element $\Rc\in\Ac\otimes\Ac$, subject to the following relations:
\beq \label{intertwining}
\D^{op}(x)=\Rc\D(x)\Rc^{-1}
\eeq
for any $x\in\Ac$, and
\beq \label{quasi}
(\D\otimes\id)\Rc=\Rc_{13}\Rc_{23}, \qquad (\id\otimes\Delta)\Rc=\Rc_{13}\Rc_{12},
\eeq
where
$$
\Rc_{12}=\Rc\otimes1, \qquad \Rc_{23}=1\otimes \Rc, \qquad \Rc_{13} = (\si\otimes\,\id)(\Rc_{23})
$$
are elements of $\Ac\otimes\Ac\otimes\Ac$.

Now we fix the representation $\pi_z$ defined on Chevalley generators as
\begin{align*}
\pi_{z}\hr{e_\a} &=
\begin{pmatrix}
0&q^\frac14&0 \\
0&0&-q^{-\frac14} \\
0&0&0
\end{pmatrix},
&
\pi_{z}\hr{e_{\d-2\a}} &=
\begin{pmatrix}
0&0&0 \\
0&0&0 \\
\sqrt{q+q^{-1}}&0&0
\end{pmatrix}z, \\
\pi_{z}\hr{e_{-\a}} &=
\begin{pmatrix}
0&0&0 \\
q^{-\frac14}&0&0 \\
0&-q^\frac14&0
\end{pmatrix},
&
\pi_{z}\hr{e_{-\d+2\a}} &=
\begin{pmatrix}
0&0&\sqrt{q+q^{-1}} \\
0&0&0 \\
0&0&0
\end{pmatrix}z^{-1}, \\
\pi_{z}\hr{k_\a} &=
\begin{pmatrix}
q&0&0 \\
0&1&0 \\
0&0&q^{-1}
\end{pmatrix},
&
\pi_{z}\hr{k_{\d-2\a}} &=
\begin{pmatrix}
q^{-2}&0&0 \\
0&1&0 \\
0&0&q^2
\end{pmatrix}.
\end{align*}
After the representation is chosen, we obtain the $R$-matrix
\footnote{
The scaling of the basis of the $\advadva$-module is chosen in such a way that $R$-matrix $R(x)$ becomes symmetric with respect to the secondary diagonal. In that basis, the $R$-matrix coincides with the one obtained in \cite{J}.
}
$$
R\hr{\frac zw} = \hr{\pi_z\otimes\pi_w}\Rc
$$
directly from the definition of $\Rc$ by applying $\pi_z\otimes\pi_w$ to relation \rf{intertwining} and substituting the Chevalley generators for $x$. Condition \rf{intertwining} defines $R(x)$ up to a scalar.

\subsection{Chevalley and Drinfeld realizations}

Following \cite{KT}, we represent the universal $\Rc$-matrix of $\ACh$ as
\footnote{The formula for the $\Rc$-matrix differs from the formula in \cite{KT} because $q$-commutators are replaced with $q^{-1}$-commutators. Moreover, our formula differs from the one in \cite{KS}; the $\Rc$-matrix in \cite{KS} is inverse to the $\Rc$-matrix here.}:
\beq \label{R_matrix}
\begin{split}
\Rc = \Rc_-\Kc q^{-h \otimes h}\Rc_+, &\qquad
\Rc_- = \hr{\prodl{\ga<\d}\ra \exp_{q_\ga}
\hr{(q^{-1}-q)e_\ga \otimes e_{-\ga}}}, \\
\Kc = \exp\hr{-\sums{n>0}{S_{n}}}, &\qquad
\Rc_+ = \hr{\prodl{\ga>\d}\ot\exp_{q_\ga}
\hr{(q^{-1}-q)e_\ga k_\a^{-(\a,\ga)}\otimes k_\a^{(\a,\ga)} e_{-\ga}}},
\end{split}
\eeq
where $\ga$ ranges over real positive roots, $h$ is defined by
$$
q^{\pm h} = k^{\pm 1},
$$
and $S_{n}$ is given by the formula
$$
S_{n}=\frac{n\hr{q-q^{-1}}^2\hr{q+1+q^{-1}}(e_{n\d}\otimes e_{-n\d})}
{\hr{q^n-q^{-n}}\hr{q^{n}+(-1)^{n+1}+q^{-n}}}.
$$

Now, we can find the scalar prefactor for $R(x)$. Applying $\pi_z \otimes \pi_w$ to $\Kc \cdot q^{-h \otimes h}$, we obtain the scalar
\beq \label{prefactor}
q^{-1}\exp\hr{-\sums{n>0}\frac{\hr{q^n-q^{-n}}x^n}{\hr{q^n+(-1)^{n+1}+q^{-n}}n}}.
\eeq
Therefore, $R(x)$ is equal to matrix \rf{R} times scalar prefactor \rf{prefactor}.

\begin{rem}
Scalar \rf{prefactor} expands into the product
$$
q^{-1}\prodl{k=0}{\infty}\hr{\frac{(1+q^{3k+2}x)(1-q^{3k+3}x)}{(1+q^{3k}x)(1-q^{3k+1}x)}}^{(-1)^k},
$$
in the domain $|q|<1$ and into the product
$$
q^{-1}\prodl{k=0}{\infty}\hr{\frac{(1+q^{-3k}x)(1-q^{-(3k+1)}x)}{(1+q^{-(3k+2)}x)(1-q^{-(3k+3)}x)}}^{(-1)^k},
$$
in the domain $|q|>1$.
\end{rem}

\begin{theor} \label{theor Ch-Dr}
The isomorphism between associative algebras $\ACh$ and $\AD$ is established by the following map:
\beq \label{Ch-Dr}
\begin{array}{rclrcl}
k_{\d-2\a}  &\mapsto& k^{-2},                             &\qquad k_{\a} &\mapsto& k,     \\
e_{\d-2\a}  &\mapsto& a(qf_1f_0-f_0f_1)k^{-2},            &\qquad e_{\a} &\mapsto& e_{0},   \\
e_{-\d+2\a} &\mapsto& ak^2(q^{-1}e_0e_{-1}-e_{-1}e_0),    &\qquad e_{-\a} &\mapsto& f_{0},
\end{array}
\eeq
where $a$ is defined in \rf{ab}.
\end{theor}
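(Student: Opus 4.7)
The plan is to construct a ring homomorphism $\phi:\ACh\to\AD$ by the formulas in the theorem and an inverse $\psi:\AD\to\ACh$, then verify both that each map respects the defining relations and that the compositions are the identity on generators. Well-definedness of $\phi$ amounts to checking every Chevalley relation on the images. The Cartan commutation relations follow by extracting the constant-in-$z$ mode of the Drinfeld identities $K^\pm(z)e(w)K^\pm(z)^{\mp 1}=\a(w/z)^{\mp 1}e(w)$ and the analogous $f$-relations, using $\a(x)\to q$ as $x\to 0$ and $\a(x)\to q^{-1}$ as $x\to\infty$; applied to the image $e_{\d-2\a}\mapsto a(qf_1f_0-f_0f_1)k^{-2}$ this produces the required $q^{\mp 2}$ and $q^{\pm 4}$ weights. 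The identity $k_\a^2 k_{\d-2\a}=1$ is immediate, and $[e_\a,e_{-\a}]=(k-k^{-1})/(q-q^{-1})$ is the $z^0w^0$ component of the Drinfeld $ef$ commutator.

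The vanishing brackets $[e_{\pm\a},e_{\mp(\d-2\a)}]=0$ reduce, after substitution, to identities among $e_0,f_0,f_1$ (respectively $f_0,e_0,e_{-1}$); they follow from the $ef$ commutator relation combined with the quadratic $ff$ or $ee$ relation used to cancel the surviving contributions. The Cartan bracket $[e_{\d-2\a},e_{-(\d-2\a)}]=(k^{-2}-k^2)/(q-q^{-1})$ is the longest of these computations: expansion yields a quartic in $e_0,e_{-1},f_0,f_1$, and one applies the $ef$ commutator repeatedly together with the expansion of $K^\pm$ to first order in $a_{\pm 1}$; the prefactor $a^2=1/(q+q^{-1})$ is then exactly what is needed to recover the coefficient $1/(q-q^{-1})$ on the right-hand side.

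The quantum Serre relations $(\ad_q e_{\pm(\d-2\a)})^2 e_{\pm\a}=0$ are quadratic in the image of $e_{\pm(\d-2\a)}$ and reduce, after expansion, to the Drinfeld cubic Serre relations with $q^{\pm 3}$ coefficients. The relations $(\ad_q e_{\pm\a})^5 e_{\pm(\d-2\a)}=0$ are the main obstacle: under $\phi$ they become fifth-order expressions in $e_0,f_0,f_1$ (resp.\ $f_0,e_0,e_{-1}$). I expect to handle them by rewriting the iterated $q$-commutator as a coefficient extraction from the generating-function identity $(z-q^2w)(qz+w)e(z)e(w)=(q^2z-w)(z+qw)e(w)e(z)$, applied together with both families of Drinfeld cubic Serre relations, in order to collapse the fivefold nested bracket down to an identity that follows from the quadratic relations alone.

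To close the argument I would construct the inverse $\psi:\AD\to\ACh$ by following the Cartan--Weyl construction recalled in Section~1.2: send $e_0\mapsto e_\a$, $f_0\mapsto e_{-\a}$, $k\mapsto k_\a$, and define the higher modes $e_n$, $f_n$ as the iterated $q^{-1}$-commutators matching the formulas for $e_{\a+n\d}$ and $e_{-\a+n\d}$, with $a_n$ extracted from the imaginary-root vectors $e_{n\d}$ via the Schur polynomial identity already displayed. Well-definedness of $\psi$ on the Drinfeld relations follows from the normal-ordering properties of the Cartan--Weyl basis established in \cite{KT}, while $\phi\circ\psi=\mathrm{id}$ and $\psi\circ\phi=\mathrm{id}$ hold on generators by direct comparison of the formulas.
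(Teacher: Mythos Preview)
Your approach is essentially the same as the paper's. The paper's proof is much terser: it calls the homomorphism verification a ``straightforward verification'' without detailing the individual relations, exhibits the inverse $\psi$ via the explicit Cartan--Weyl-to-Drinfeld correspondence (the map you describe), asserts that $\psi$ is also a surjective homomorphism, and concludes. Your outline fills in exactly the kind of detail the paper omits --- in particular the reduction of the Serre relations to the quadratic and cubic Drinfeld identities --- but the overall architecture (define $\phi$, check relations, build $\psi$ from the Cartan--Weyl basis, compare compositions) is identical.
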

\begin{proof}
A straightforward verification and the inductive construction of the Cartan-Weyl basis show that the map \rf{Ch-Dr} above is a surjective homomorphism. More precisely, every Drinfeld generator can be obtained as
\beq \label{CW-Dr}
\begin{array}{rcllrcll}
e_{\a+n\d}      &\mapsto& e_n,    &n\ge0,    & \qquad -k_\a^{-1}e_{\a-n\d} &\mapsto& e_{-n}, &n>0,                 \\
e_{-\a-n\d}     &\mapsto& f_{-n}, &n\ge0,    & \qquad -e_{-\a+n\d}k_\a     &\mapsto& f_n,    &n>0,                 \\
k_\a            &\mapsto& k,      &          & \qquad -b^{-1}e_{n\d}       &\mapsto& a_n,    &n\in\Z\setminus\{0\}.
\end{array}
\eeq
Reversing all the arrows in map \rf{CW-Dr}, we obtain a map inverse to \rf{Ch-Dr}. Finally, map \rf{CW-Dr} with reversed arrows is also an epimorphism, and the statement of the theorem follows.
\end{proof}
\begin{rem} \label{rem} The above map is an isomorphism of associative algebras. It preserves the counit but does not respect comultiplication maps $\D$ and $\D_\Dc$.
\end{rem}
As for the link between the Drinfeld and the standard comultiplications, we have the following
\begin{prop}
The tensor $\RR_-$ is a cocycle for $\D_\Dc$ so that for any $x\in\ACh$
$$
\D(x)=\hr{\Rc_-}^{-1}\D_\Dc(x)\Rc_-.
$$
\end{prop}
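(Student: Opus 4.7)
The plan is to prove the equivalent twist identity $\D_\Dc(x)=\Rc_-\D(x)\Rc_-^{-1}$ for all $x\in\ACh$ by verifying it on a generating set and extending by multiplicativity. Since $\D$, $\D_\Dc$, and conjugation by a fixed invertible element are all algebra homomorphisms, it suffices to check the identity on the Chevalley generators $e_{\pm\a}$, $e_{\pm(\d-2\a)}$, and $k_\a^{\pm1}$. The cocycle statement $(\Rc_-)_{12}(\D_\Dc\otimes\id)(\Rc_-)=(\Rc_-)_{23}(\id\otimes\D_\Dc)(\Rc_-)$ is then automatic: once the twist identity is in place, it is equivalent, via the standard argument in Drinfeld twist theory, to the coassociativity of $\D_\Dc$, which is built into the Drinfeld realization.

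For the generator check, the Cartan generators are immediate: both $\D(k_\a)$ and $\D_\Dc(k)$ equal $k\otimes k$, and each factor $\exp_{q_\ga}\bigl((q^{-1}-q)e_\ga\otimes e_{-\ga}\bigr)$ of $\Rc_-$ has total Cartan weight zero, hence commutes with $k_\a\otimes k_\a$. For $e_\a\mapsto e_0$, one starts from $\D(e_\a)=e_\a\otimes 1+k_\a\otimes e_\a$ and conjugates successively by the factors of $\Rc_-$ ordered along the real positive roots $\a,\,2\a+\d,\,\a+\d,\,2\a+3\d,\,\a+2\d,\ldots<\d$. The target, extracted as the $z^0$ coefficient of $\D_\Dc(e(z))=e(z)\otimes 1+K^-(z)\otimes e(z)$, is
\pbeq
\D_\Dc(e_0) = e_0\otimes 1 + \sum_{j\ge 0}K^-[j]\otimes e_j.
\peeq
Each new factor of $\Rc_-$ contributes one additional summand on the right: the Cartan-Weyl recursion $e_{\a+n\d}=b\,[e_{\a+(n-1)\d},e_\d']_{q^{-1}}$ produces the $e_j$ in the second tensor slot, and the Schur-polynomial formula defining $e_{n\d}'$ in terms of $e_\d,\,e_{2\d},\ldots,e_{n\d}$ produces, after identification with the Drinfeld Cartan currents $a_n$ via \rf{CW-Dr}, the coefficient $K^-[j]$ in the first slot. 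The remaining generators $e_{-\a}\mapsto f_0$ and $e_{\pm(\d-2\a)}$ are handled analogously, invoking the explicit images in \rf{Ch-Dr}.

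The principal obstacle is matching these infinite resummations exactly: one must prove that conjugation by the full product $\Rc_-$ reproduces every coefficient of $K^-(z)=k^{-1}\exp\bigl(-(q-q^{-1})\sum_{n>0}a_{-n}z^n\bigr)$, and that the analogous identities hold for the three other Chevalley generators. This requires delicate combinatorics with the non-linear Schur-polynomial change of variables between imaginary root vectors $e_{n\d}\in\ACh$ and the Cartan currents $a_n\in\AD$, and it must be carried out in the topological completion of $\ACh\otimes\ACh$ mentioned in Section 1.1, where the infinite product $\Rc_-$ is well defined. A more conceptual derivation, which I would also attempt in parallel, is to start from the intertwiner $\D^{op}(x)\Rc=\Rc\D(x)$ and use the factorization $\Rc=\Rc_-\Kc\,q^{-h\otimes h}\Rc_+$ to peel off $\Rc_-$; the remaining conjugation by $\Kc\,q^{-h\otimes h}\Rc_+$ should then be identified with $\D_\Dc^{op}$ using that this factor is a universal $\Rc$-matrix for the Drinfeld comultiplication.
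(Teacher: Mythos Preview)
The paper does not actually prove this proposition: immediately after the statement it writes ``The proof is given in \cite{KS}'' and moves on. So there is no in-paper argument to compare yours against.

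Your strategy is the standard one and is essentially what \cite{KS} (and, in the untwisted setting, \cite{EKP}) carry out: reduce to generators using that both sides are algebra maps, observe the Cartan case is trivial because each factor $e_\ga\otimes e_{-\ga}$ has total weight zero, and then for the simple root vectors track how successive conjugation by the ordered factors of $\Rc_-$ rebuilds the infinite tail of $\D_\Dc(e_0)$ and $\D_\Dc(f_0)$. You have correctly identified the only genuinely nontrivial step, namely the Schur-polynomial bookkeeping that converts the imaginary root vectors $e_{n\d}$ appearing in $\Rc_-$ into the modes of $K^-(z)$; in \cite{KS} this is handled by working with the generating series $\sum_n e'_{n\d}z^n$ and $\sum_n e_{n\d}z^n$ simultaneously, which turns the recursion into a closed functional identity rather than a term-by-term matching.

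Two small cautions. First, your claim that the cocycle identity is ``automatic'' from coassociativity of $\D_\Dc$ is not quite complete: coassociativity of both $\D$ and $\D_\Dc$ together with the twist relation only pins down $(\Rc_-)_{12}(\D_\Dc\otimes\id)(\Rc_-)$ and $(\Rc_-)_{23}(\id\otimes\D_\Dc)(\Rc_-)$ up to a central factor, and one still has to argue that this factor is $1$ (e.g.\ by a weight/degree argument or by evaluating the counit). Second, the alternative route you sketch at the end---peeling $\Rc_-$ off the full $\Rc$ via $\Rc=\Rc_-\Kc q^{-h\otimes h}\Rc_+$---requires knowing in advance that $\Kc q^{-h\otimes h}\Rc_+$ intertwines $\D_\Dc$ and $\D_\Dc^{op}$, which is precisely of the same difficulty as the statement you are trying to prove; it is not an independent shortcut.
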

The proof is given in \cite{KS}. Another expression for $\Rc_-$ in terms of the Drinfeld generators can also be found there.

\subsection{From Chevalley to $RLL$ realization}

According to \cite{D}, the universal $R$-matrix of any quasitriangular Hopf algebra satisfies the quantum Yang-Baxter equation
\beq \label{qybe}
\Rc_{12}\Rc_{13}\Rc_{23}=\Rc_{23}\Rc_{13}\Rc_{12}.
\eeq
\begin{theor} \label{theor RLL-Ch}
An isomorphism between the associative algebras $\AR$ and $\ACh$ can is given by the maps
\beq \label{Loper}
\begin{split}
L^-(z) &\mapsto (\pi_z\otimes\id)\Rc, \\
L^+(z) &\mapsto (\pi_z\otimes\id)\Rc_{21}^{-1}.
\end{split}
\eeq
\end{theor}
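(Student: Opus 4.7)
The plan is to verify that the assignments \rf{Loper} send the generators of $\AR$ to elements satisfying every defining relation, and then to exhibit the inverse map using the Gaussian coordinates of $L^\pm(z)$. The $RLL$ relations \rf{RLL} are obtained by applying $\pi_z \otimes \pi_w \otimes \id$ to the quantum Yang-Baxter equation \rf{qybe} and to its $\Rc_{21}^{-1}$-variants: on each side the combination $(\pi_z \otimes \pi_w)\Rc$ produces $R(z/w)$ up to the scalar prefactor \rf{prefactor}, and that prefactor cancels from both sides. Relation \rf{pm_identity} follows at once from the Cartan part $\Kc\, q^{-h\otimes h}$ of \rf{R_matrix} together with $(S\otimes\id)\Rc = \Rc^{-1}$, which relates the zero modes of $L^+$ and $L^-$ by inversion.

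The substantive step --- and the step I expect to be the main obstacle --- is the pair of twisted-specific relations \rf{q-det} and \rf{symmetry}. For \rf{q-det}, I would use that in $\pi_z \otimes \pi_{q^2z} \otimes \pi_{q^4z}$ the $q$-antisymmetrizer projects onto a one-dimensional $\advadva$-submodule on which $\Rc$ acts as the identity. Applying $\pi_z \otimes \pi_{q^2z} \otimes \pi_{q^4z} \otimes \id$ to the iterated coproduct $(\D\otimes\D\otimes\id)\Rc$ and restricting to that subspace produces exactly $\det_q L^-(z) = 1$, and the analogous argument for $\Rc_{21}^{-1}$ handles $L^+$. Relation \rf{symmetry}, which is what distinguishes the twisted case, encodes the duality $\pi_{z}^{*} \cong \pi_{-q^{-3}z}$ of the fundamental representation: the shift $-q^{-3}$ and the transposition $(i,j) \mapsto (4-j,4-i)$ appearing in the definition of $\tilde L^\pm(z)$ are precisely the intertwiner for this duality, and combined with $(\id\otimes S)\Rc = \Rc_{21}^{-1}$ they yield the stated identity after a direct matrix computation on the explicit form of $\pi_z$ given in Subsection 2.1.

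Bijectivity is then established by matching the Gaussian coordinates of the images of $L^\pm(z)$ against the Chevalley generators: unwinding \rf{Gauss} together with the explicit form of $\Rc$ in \rf{R_matrix} shows that the zero modes of $e^\pm_1(z), f^\pm_2(z), k^\pm_i(z)$ recover $e_{\pm\a}$ and $k_\a^{\pm 1}$, while the entries $e^\pm_{31}(z)$ and $f^\pm_{13}(z)$ --- equivalently, the $q$-commutators of $e^\pm_1(z)$ with $e^\pm_2(z)$ and of $f^\pm_1(z)$ with $f^\pm_2(z)$ --- recover $e_{\pm(\d - 2\a)}$ up to the normalization constant $a$ from \rf{Ch-Dr}. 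This yields an explicit inverse assignment, and combined with Theorem \ref{theor Ch-Dr} --- which already identifies the Cartan-Weyl subalgebra generated by the Chevalley generators with $\AD$ --- the two maps compose to the identity on generators, completing the proof.
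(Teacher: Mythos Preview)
Your derivation of the $RLL$ relations \rf{RLL} from the Yang--Baxter equation matches the paper. For the twisted-specific relations you take a genuinely different route: you invoke a $q$-antisymmetrizer in $\pi_z\otimes\pi_{q^2z}\otimes\pi_{q^4z}$ for \rf{q-det} and the self-duality $\pi_z^*\cong\pi_{-q^{-3}z}$ for \rf{symmetry}. The paper argues more concretely. For \rf{symmetry} it observes that $R(-q^3)$ has a one-dimensional kernel spanned by $v=q^{-1/2}v_1\otimes v_3+v_2\otimes v_2+q^{1/2}v_3\otimes v_1$, checks on Chevalley generators that $(\pi_z\otimes\pi_{-q^{-3}z})\D(x)$ annihilates $v$, and then applies $(\D\otimes\id)\Rc=\Rc_{13}\Rc_{23}$ to conclude that $L_1(z)L_2(-q^{-3}z)$ fixes $v$, which unwinds to \rf{symmetry}. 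For \rf{q-det} it computes $\pi_z(e_{\pm n\d})$ from the Cartan--Weyl construction, reads off $k_1^\pm(z)=k_3^\pm(-q^{-3}z)^{-1}$ and $k_2^\pm(z)=k_3^\pm(-q^{-1}z)k_3^\pm(q^{-2}z)^{-1}$ from the factor $\Kc$, and verifies $\det_q L^\pm(z)=k_1^\pm(z)k_2^\pm(q^{-2}z)k_3^\pm(q^{-4}z)=1$ directly. Your fusion-style arguments are reasonable alternatives, but the identity you cite, $(\id\otimes S)\Rc=\Rc_{21}^{-1}$, is not correct; the antipode identity you want is $(S\otimes\id)\Rc=\Rc^{-1}$, and the duality argument has to be rerouted through it together with $\pi_z^*(x)=\pi_z(S(x))^t$.

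There is, however, a real gap in your bijectivity step. Exhibiting the Chevalley generators in the image of \rf{Loper} gives surjectivity only; your ``explicit inverse assignment'' is a map of generating sets, and you have not shown it extends to an algebra homomorphism $\ACh\to\AR$, i.e.\ that the Chevalley relations alone force all of \rf{q-det}--\rf{RLL}. The paper does not attempt this directly either: it explicitly defers injectivity to Theorem~\ref{theor Dr-RLL}, which constructs an epimorphism $\AD\to\AR$ via \rf{Dr-RLL} and checks that its composite with \rf{Loper} and \rf{Ch-Dr} is the identity. Closing that triangle is what supplies injectivity, and your argument needs either to invoke Theorem~\ref{theor Dr-RLL} or to supply an independent verification that the inverse assignment respects the $\AR$ relations.
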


\begin{proof}
Applying $\;\pi_z\otimes\pi_w\otimes\id\;$, $\;\pi_z\otimes\id\otimes\pi_w\;$ and $\;\id\otimes\pi_z\otimes\pi_w\;$ to relation \rf{qybe}, we derive commutation relations \rf{pm_identity} and \rf{RLL}. Using formulas \rf{R_matrix} and \rf{Loper}, we obtain the Gaussian decomposition \rf{Gauss}.

The matrix $R(-q^3)$ has a one-dimensional kernel, which is spanned by vector
$$
v = q^{-\frac12} v_1\otimes v_3 + v_2\otimes v_2 + q^\frac12 v_3\otimes v_1,
$$
where $\hc{v_1,v_2,v_3}$ is the basis of the representation $\pi_z$. It follows that $v$ is an eigenvector of operator $L_1(z)L_2(-q^{-3}z)$. Let $\la(z)$ be an eigenvalue of $v$. One can check that $\hr{\pi_z\otimes\pi_{-q^{-3}z}}\D(x)$ vanishes on $v$ for every $x\in\ACh$ (it is sufficient to varify the vanishing condition for the Chevalley generators only). Applying $\hr{\pi_z\otimes\pi_{-q^{-3}z}\otimes\id}$ to the first equation in \rf{quasi}, we obtain $\la(z)=1$. Hence, $v$ is stable under $L_1(z)L_2(-q^{-3}z)$, which turns out to be equivalent to the relation \rf{symmetry}:
$$
\begin{pmatrix}
l_{11}(z) & l_{12}(z) & l_{13}(z) \\
l_{21}(z) & l_{22}(z) & l_{23}(z) \\
l_{31}(z) & l_{32}(z) & l_{33}(z)
\end{pmatrix}
\begin{pmatrix}
          l_{33}(-q^{-3}z)  &  q^{-\frac12} l_{23}(-q^{-3}z)  &  q^{-1}       l_{13}(-q^{-3}z) \\
q^\frac12 l_{32}(-q^{-3}z)  &               l_{22}(-q^{-3}z)  &  q^{-\frac12} l_{12}(-q^{-3}z) \\
q         l_{31}(-q^{-3}z)  &  q^\frac12    l_{21}(-q^{-3}z)  &               l_{11}(-q^{-3}z)
\end{pmatrix}
=I_3.
$$
Next, it is possible to check that
$$
\det\nolimits_q(L^\pm(z)) = k_1^\pm(z)k_2^\pm(q^{-2}z)k_3^\pm(q^{-4}z).
$$
Following the procedure of the construction of the Cartan-Weyl basis we get
$$
\pi_z\hr{e_{\pm n\d}} = b\frac{q^n-q^{-n}}{n(q-q^{-1})}
\begin{pmatrix}
-z^{\pm n} & 0 & 0 \\
0 & (q^2z)^{\pm n} - (-qz)^{\pm n} & 0 \\
0 & 0 & (-q^3z)^{\pm n}
\end{pmatrix}, \quad n>0.
$$
Applying $\pi_z$ to either first or second tensor component of $\Kc$, we arrive at
\beq \label{add2}
\begin{split}
k_1^\pm(z) &= k_3^\pm(-q^{-3}z)^{-1}, \\
k_2^\pm(z) &= k_3^\pm(-q^{-1}z)k_3^\pm(q^{-2}z)^{-1},
\end{split}
\eeq
which implies the desired relation \rf{q-det}.

Since all the Chevalley generators are in the image of map \rf{Loper}, we obtain an epimorphism from $\AR$ to $\ACh$ (and, therefore, to $\AD$). In Theorem \rfs{theor Dr-RLL} below, we establish an epimorphism from $\AD$ to $\AR$, which is inverse to \rf{Loper}. Hence, the theorem is proved modulo the result of Theorem \rfs{theor Dr-RLL}.
\end{proof}

\begin{prop}
Let $\phi(x)\in\ACh$ be the image of $x\in\AR$ under isomorphism \rf{Loper}. Then
$$
\phi\hr{\D_R(x)} = \si\circ\D(\phi(x)).
$$
\end{prop}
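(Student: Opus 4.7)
The plan is to reduce the verification to the matrix generators $l_{ij}^\pm(z)$ and then apply $\pi_z\otimes\id\otimes\id$ to the quasi-triangularity axioms. Since $\phi$ is an algebra isomorphism and $\D_R$, $\D$, $\sigma$ are all algebra homomorphisms, both $\phi\circ\D_R$ and $\sigma\circ\D\circ\phi$ are algebra homomorphisms from $\AR$ to $\ACh\otimes\ACh$; hence it suffices to check that
\[
\sigma\circ\D\bigl(\phi(l_{ij}^\pm(z))\bigr)\;=\;\sum_{m=1}^{3}\phi(l_{im}^\pm(z))\otimes\phi(l_{mj}^\pm(z)),\qquad 1\le i,j\le 3,
\]
since the right-hand side coincides with $\phi(\D_R(l_{ij}^\pm(z)))$.

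For the $L^-$ case, I would apply $\pi_z\otimes\id\otimes\id$ to both sides of the quasi-triangularity axiom $(\id\otimes\D)\Rc=\Rc_{13}\Rc_{12}$. On the left this produces the $3\times 3$ matrix $(\id\otimes\D)L^-(z)$ whose $(i,j)$-entry is $\D(\phi(l_{ij}^-(z)))$. On the right, one notes that $(\pi_z\otimes\id\otimes\id)\Rc_{13}$ is the $3\times 3$ matrix with $(i,j)$-entry $1\otimes\phi(l_{ij}^-(z))$, while $(\pi_z\otimes\id\otimes\id)\Rc_{12}$ has $(i,j)$-entry $\phi(l_{ij}^-(z))\otimes 1$. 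Multiplying the two matrices and using $(1\otimes x)(y\otimes 1)=y\otimes x$ in $\ACh\otimes\ACh$, the $(i,j)$-entry of the product equals $\sum_m\phi(l_{mj}^-(z))\otimes\phi(l_{im}^-(z))$. Applying $\sigma$ to this matrix identity yields the claim for $L^-$.

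For the $L^+$ case, the same strategy works with $\Rc$ replaced by $\Rc_{21}^{-1}$ once one has the analogous axiom
\[
(\id\otimes\D)\Rc_{21}^{-1}\;=\;(\Rc_{21}^{-1})_{13}(\Rc_{21}^{-1})_{12}.
\]
This reflects the standard fact that $(\ACh,\D,\Rc_{21}^{-1})$ is again a quasi-triangular Hopf algebra, and it can be verified directly by inverting $(\id\otimes\D)\Rc=\Rc_{13}\Rc_{12}$ and then conjugating by the cyclic permutation of tensor factors that converts the first slot of $\Rc$ into the second slot of $\Rc_{21}^{-1}$. Substituting into the computation above produces $\D(\phi(l_{ij}^+(z)))=\sum_m\phi(l_{mj}^+(z))\otimes\phi(l_{im}^+(z))$, and applying $\sigma$ finishes the proof.

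The principal subtlety is purely organizational: the transposition $(im,mj)\mapsto(mj,im)$ in the two tensor slots that distinguishes $\D\phi(l_{ij}^\pm)$ from $\phi\D_R(l_{ij}^\pm)$ is forced by the commutation $(1\otimes x)(y\otimes 1)=y\otimes x$ which enters the matrix multiplication on $M_3(\ACh\otimes\ACh)$, and this transposition is exactly undone by the $\sigma$ on the right-hand side of the proposition. Everything else is a routine expansion of the quasi-triangularity axioms in the representation $\pi_z$.
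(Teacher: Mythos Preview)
Your proof is correct and follows essentially the same route as the paper: both reduce the claim to the generators $l_{ij}^\pm(z)$ and then apply $\pi_z\otimes\id\otimes\id$ to the quasi-triangularity identity $(\id\otimes\D)\Rc=\Rc_{13}\Rc_{12}$ (equivalently, $(\id\otimes\D^{op})\Rc=\Rc_{12}\Rc_{13}$, which is the form the paper quotes). Your treatment is in fact more explicit than the paper's one-line hint, in particular you spell out the $L^+$ case via the standard observation that $\Rc_{21}^{-1}$ is again a universal $\Rc$-matrix for $(\ACh,\D)$.
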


\begin{proof}
The proof can be obtained by applying $\pi_z\otimes\id\otimes\id$ to the relation
$$
(\id\otimes\D^{op})\Rc = \Rc_{12}\Rc_{13},
$$
which is equivalent to \rf{quasi}.
\end{proof}

\subsection{From $RLL$ to Drinfeld realization}

First of all, we express all generators of the $RLL$ realization in terms of $e_1^\pm(z)$, $f_1^\pm(z)$, $k_1^\pm(z)$. The expressions
\beq \label{k2k3}
\begin{split}
k_2^\pm(z) &= k_1^\pm(-qz)k_1^\pm(q^2z)^{-1}, \\
k_3^\pm(z) &= k_1^\pm(-q^3z)^{-1}
\end{split}
\eeq
follow directly from \rf{add2}. Relations \rf{RLL} imply
\begin{multline*}
(q^2-1)z(q^3z+w)L_{23}^\pm(z)L_{33}^\pm(w) + q(z-w)(q^3z+w)L_{33}^\pm(z)L_{23}^\pm(w) = \\ = (q^2z-w)(q^3z+w)L_{23}^\pm(w)L_{33}^\pm(z),
\end{multline*}
which results in
$$
q(z-w)k_3^\pm(z)f_2^\pm(w) = (q^2z-w)f_2^\pm(w)k_3^\pm(z) - (q^2-1)zf_2^\pm(z)k_3^\pm(z).
$$
Setting $w=q^2z$, we get
$$
f_2^\pm(q^2z) = q^{-1}k_3^\pm(z)^{-1}f_2^\pm(z)k_3^\pm(z).
$$
On the other hand, relation \rf{symmetry} implies
$$
f_1^\pm(z) = -q^{-\frac12}k_3^\pm(-q^{-3}z)^{-1}f_2^\pm(-q^{-3}z)k_3^\pm(-q^{-3}z).
$$
We hence obtain
\beq \label{e2f2}
\begin{split}
f_2^\pm(z) &= -q^{-\frac12}f_1^\pm(-qz), \\
e_2^\pm(z) &= -q^{\frac12}e_1^\pm(-qz),
\end{split}
\eeq
where the expression for $e_2^\pm(z)$ is derived in the similar way.
The expressions for $e_{31}^\pm(z)$ and $f_{13}^\pm(z)$ are considered in Section $3$.

\begin{rem}
It is also possible to use the explicit formula for $L^\pm(z)$ to derive relations \rf{e2f2}, as we did for relations \rf{add2}.
\end{rem}

Now let
\pbeq
\begin{split}
E_i(z) &= e_i^+(z) - e_i^-(z), \\
F_i(z) &= f_i^+(z) - f_i^-(z).
\end{split}
\peeq
Then the equalities
\beq \label{E2F2}
\begin{split}
E_2(z) &= -q^\frac 12 E_1(-qz), \\
F_2(z) &= -q^{-\frac 12} F_1(-qz).
\end{split}
\eeq
hold. Moreover, relations \rf{q-det}--\rf{RLL} imply
\beq \label{kk}
\begin{split}
k_1^\pm(z)k_1^\pm(w) &= k_1^\pm(w)k_1^\pm(z), \\
k_1^-(z)k_1^+(w) &= k_1^+(w)k_1^-(z),
\end{split}
\eeq

\beq \label{kEkkFk}
\begin{split}
k_1^\pm(z)E_1(w)k_1^\pm(z)^{-1} &= \frac{q(z-w)}{q^2z-w}E_1(w), \\
k_1^\pm(z)^{-1}F_1(w)k_1^\pm(z) &= \frac{z-q^2w}{q(z-w)}F_1(w),
\end{split}
\eeq

\beq \label{EEFF}
\begin{split}
(qz+w)(z-q^2w)E_1(z)E_1(w) &= (z+qw)(q^2z-w)E_1(w)E_1(z), \\
(z+qw)(q^2z-w)F_1(z)F_1(w) &= (qz+w)(z-q^2w)F_1(w)F_1(z),
\end{split}
\eeq

\beq \label{EF}
\hs{E_1(z),F_1(w)} = (q-q^{-1})\d\hr{\frac zw}\hr{k_1^+(w)k_2^+(w)^{-1} - k_1^-(z)k_2^-(z)^{-1}}.
\eeq

\begin{theor} \label{theor Dr-RLL}
An isomorphism\footnote{As in remark \rf{rem}, the mapping above is an isomorphism of associative algebras. It preserves the counit but does not respect comultiplication maps $\D$ and $\D_\Dc$.} between the algebras $\AD$ and $\AR$ is given by the following maps:
\beq \label{Dr-RLL}
\begin{array}{lcl}
q^{-\frac 14} \hr{q-q^{-1}}e(qz)   &\mapsto &E_1(z),                       \\
q^{\frac 14}\hr{q-q^{-1}}f(qz)     &\mapsto &F_1(z),                       \\
K^\pm(qz)                          &\mapsto &k_1^\pm(z)k_2^\pm(z)^{-1}.
\end{array}
\eeq
\end{theor}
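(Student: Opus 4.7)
The plan is to establish \rf{Dr-RLL} as a well-defined homomorphism $\varphi\colon\AD\to\AR$, show that $\varphi$ is surjective, and then deduce bijectivity by combining Theorem \rfs{theor RLL-Ch} with Theorem \rfs{theor Ch-Dr}.

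First I would substitute $z\mapsto qz$, $w\mapsto qw$ in each Drinfeld defining relation and match the result against a relation already derived in this subsection. The $KK$ commutations reduce to \rf{kk}; the quadratic $ee$ and $ff$ relations become \rf{EEFF} after clearing a common factor of $q^2$; the $K^\pm e K^{\pm-1}$ and $K^\pm f K^{\pm-1}$ relations follow from \rf{kEkkFk} together with the rewriting $k_1^\pm(z) k_2^\pm(z)^{-1} = k_1^\pm(z) k_1^\pm(q^2z) k_1^\pm(-qz)^{-1}$ provided by \rf{k2k3}, the product of the three resulting conjugation factors being $\alpha(w/z)^{\pm 1}$ after simplification; and the $[e(z),f(w)]$ relation is precisely \rf{EF} once one observes that the scalars in \rf{Dr-RLL} are tuned so that $c_e c_f(q-q^{-1}) = (q-q^{-1})^{-1}$ with $c_e=q^{1/4}/(q-q^{-1})$ and $c_f=q^{-1/4}/(q-q^{-1})$.

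The cubic Serre relations form the main obstacle and demand a separate argument. My plan is to deduce them indirectly via the Cartan-Weyl picture: under the isomorphism \rf{Ch-Dr} the Drinfeld Serre polynomials correspond to the quantum adjoint vanishings $(\ad_q e_{\pm\a})^5 e_{\pm(\d-2\a)}=0$ and $(\ad_q e_{\pm(\d-2\a)})^2 e_{\pm\a}=0$ built into $\ACh$, which by Theorem \rfs{theor RLL-Ch} transfer via \rf{Loper} to identities in $\AR$; expressing these in terms of $E_1, F_1, k_1^\pm$ using \rf{e2f2} and \rf{E2F2} (and the extended generators of Section $3$) yields the cubic identities that are the images of the Drinfeld Serre relations. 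An alternative direct route is to normal-order $E_1(z_1) E_1(z_2) E_1(z_3)$ using \rf{EEFF}, multiply by the Serre polynomial, and observe the cancellations after symmetrization.

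Surjectivity then proceeds generator by generator. The Laurent modes of $E_1(z)=e_1^+(z)-e_1^-(z)$ split into non-negative and negative powers of $z$, so the individual $e_1^\pm[n]$ lie in the image of the Drinfeld mode $e_n$; the same holds for $f_1^\pm[n]$. To extract $k_1^\pm(z)$ alone from $k_1^\pm(z)k_2^\pm(z)^{-1} = k_1^\pm(z)k_1^\pm(q^2z)k_1^\pm(-qz)^{-1}$, pass to the logarithm of the exponential form of $k_1^\pm(z)$; the system becomes diagonal on modes with $n$-th eigenvalue $1+q^{-2n}-(-q)^{-n}$, non-vanishing at generic $q$, so the inversion is well defined. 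The remaining generators $k_2^\pm, k_3^\pm, e_2^\pm, f_2^\pm$ are recovered from \rf{k2k3} and \rf{e2f2}, and $e_{31}^\pm, f_{13}^\pm$ are treated in Section $3$. Injectivity comes for free: Theorem \rfs{theor RLL-Ch} composed with Theorem \rfs{theor Ch-Dr} yields an epimorphism $\AR\to\AD$, and composing it with $\varphi$ returns each Drinfeld generator $e_n, f_n, a_n, k^{\pm 1}$ to itself by inspection of \rf{CW-Dr}; hence $\varphi$ is a two-sided inverse and the theorem follows, simultaneously completing the proof of Theorem \rfs{theor RLL-Ch}.
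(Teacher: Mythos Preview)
Your overall strategy---check that \rf{Dr-RLL} respects the defining relations via \rf{kk}--\rf{EF}, recover the Gaussian modes $e_1^\pm[n],f_1^\pm[n],k_1^\pm[n]$ to establish surjectivity, and then invoke the composition with \rf{Loper} and \rf{Ch-Dr} to conclude bijectivity---coincides with the paper's proof. The mode-splitting argument for $E_1,F_1$ and the logarithmic inversion for $k_1^\pm$ are exactly what the paper has in mind when it writes that the coordinates ``can be obtained step by step''.

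There is, however, a genuine circularity in your treatment of the Serre relations. You propose to transport the Chevalley Serre relations to $\AR$ ``by Theorem~\rfs{theor RLL-Ch}'' via \rf{Loper}. But \rf{Loper} is a map $\AR\to\ACh$, and at this stage of the argument it is only known to be an \emph{epimorphism}: the injectivity of \rf{Loper} is precisely what Theorem~\rfs{theor RLL-Ch} defers to Theorem~\rfs{theor Dr-RLL}. An identity in the target of an epimorphism does not lift to an identity in the source unless the kernel is trivial, which is what you are ultimately trying to prove. So the indirect route, as written, assumes the conclusion.

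The paper's own proof, it must be said, simply asserts that \rf{kk}--\rf{EF} suffice for the homomorphism property and does not mention the Serre relations at all; your proposal is more scrupulous in raising the issue even if the first resolution you offer is circular. Your ``alternative direct route''---normal-ordering $E_1(z_1)E_1(z_2)E_1(z_3)$ via \rf{EEFF}, multiplying by the Serre polynomial, and symmetrizing---is the honest way through and does not rely on any of the other theorems. If you commit to that computation (or cite its analogue in the literature on Drinfeld realizations), the rest of your argument goes through and matches the paper's.
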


\begin{proof}
Formulas \rf{kk}--\rf{EF} imply that the map \rf{Dr-RLL} is a homomorphism. For all $n\ge0$ Gaussian coordinates $k_1^\pm[n]$ can be obtained step by step from the product $k_1^\pm(z)k_2^\pm(z)^{-1}$, which together with relations \rf{k2k3} and \rf{E2F2} imply that map \rf{Dr-RLL} is an epimorphism. Finally, one can check that the composition of maps \rf{Loper}, \rf{Ch-Dr} and \rf{Dr-RLL} is the identity map.
\end{proof}

\section{Link with composite Drinfeld currents}
Here we revise some definitions, given in \cite{KS} and then express the generating functions $f_{13}^\pm(z)$ and $e_{31}^\pm(z)$ in terms of the Drinfeld currents.

Recall that in any quantum affine algebra there exist two types of Borel subalgebras. Borel subalgebras of the first type come from the Drinfeld realization. Let $U_F$ denote the subalgebra of $\advadva$ generated by $k^{\pm 1},f_n,\,n\in\Z;\;a_n,\,n>0$, and let $U_E$ denote the subalgebra of $\advadva$ generated by $k^{\pm 1},e_n,\,n\in\Z;\;a_n,\,n<0$. The ``current'' Borel subalgebra $U_F$ contains subalgebra $U_f$ generated by $f_n,\,n\in\Z$. The ``current'' Borel subalgebra $U_E$ contains subalgebra $U_e$ generated by $e_n,\,n\in\Z$.

Borel subalgebras of the second type are obtained via the Chevalley realization. Let $U_q(\bgt_+)$ and $U_q(\bgt_-)$ denote a pair of subalgebras of $\advadva$ generated by
\pbe
e_{\a},\;\,e_{\d-2\a},\;\,k^{\pm1}_{\a} \qquad\text{and}\qquad
e_{-\a},\;\,e_{-(\d-2\a)},\;\,k^{\pm1}_{\a}
\pee
respectively. In terms of the Drinfeld realization, these subalgebras are generated by
\pbe
k^{\pm 1},\;\,e_0,\;\,qf_1f_0-f_0f_1 \qquad\text{and}\qquad k^{\pm 1},\;\,f_0,\;\,q^{-1}e_0e_{-1}-e_{-1}e_0.
\pee
respectively.

Let $U_F^+$, $U_f^-$, $U_e^+$ and $U_E^-$ denote the following intersections
of Borel subalgebras:

\pbea
U_f^-=U_F\cap U_q(\bgt_-), &\qquad U_F^+=U_F\cap U_q(\bgt_+), \\
U_e^+=U_E\cap U_q(\bgt_+), &\qquad U_E^-=U_E\cap U_q(\bgt_-).
\peea
The superscript sign indicates the Borel subalgebra $U_q(\bgt_\pm)$ containing the given algebra, and the subscript letter indicates ``current'' Borel subalgebra $U_F$ or $U_E$ that it is intersected with. These letters are capitals if the subalgebra contains imaginary root generators $a_n$ and are lower case otherwise.

Let $P^+$ and $P^-$ denote projection operators such that for any $f_+\in U_F^+$ and for any $f_-\in U_f^-$
\be\label{Pdef}
P^+(f_-f_+)=\e(f_-)f_+, \qquad P^-(f_-f_+)=f_-\e(f_+).
\ee
Also let $P^{*+}$ and $P^{*-}$ denote projecton operators such that for any $e_+\in U_e^+$ and for any $e_-\in U_E^-$
\be\label{Pdefa}
P^{*+}(e_+e_-)=e_+\e(e_-), \qquad P^{*-}(e_+e_-)=\e(e_+)e_-.
\ee

Finally, let us introduce composite currents $s(z)$ and $r(z)$ as follows:
\begin{align*}
s(z) &= \res\limits_{w=-q^{-1} z}f(z)f(w)\frac{dw}w, \\
r(z) &= \res\limits_{w=-qz}e(w)e(z)\frac{dw}w.
\end{align*}

\begin{rem}
The currents $s(z)$ and $\tilde s(z)$ were defined in \cite{KS}. The definition of $s(z)$ coincides with the one given above, while $\tilde s(z) = -s(-qz)$ or equivalently
$$
\tilde s(z) = \res\limits_{w=-qz}f(w)f(z)\frac{dw}w.
$$
An involution $\iota$ of the algebra $\advadva$ was also defined in \cite{KS}:
$$
\iota(e_n) = f_{-n}, \qquad \iota(f_n) = e_{-n}, \qquad \iota(a_n) = a_{-n}, \qquad \iota(K_0) = K_0^{-1}.
$$
Therefore, the current $r(z)$ satisfies the equality
$$
r(z) = \iota\hr{-\tilde s\hr{z^{-1}}}.
$$
\end{rem}

\begin{prop} \label{gauss_currents}
Isomorphism \rf{Dr-RLL} maps the Gaussian coordinates $e_{31}^\pm(z)$ and $f_{13}^\pm(z)$ to the expressions
\beq \label{e31f13}
\begin{split}
e_{31}^\pm(z) &\mapsto
\mp(1-q) \hr{ q^{-1} \hs{P^{*\pm}(e(-q^2z)),e_0}_{q^{-1}} + \hs{-q^2ze_{-1},P^{*\pm}(e(-q^2z))\mp e_0}_{q^{-1}} }, \\
f_{13}^\pm(z) &\mapsto
\pm(1-q) \hr{ q\hs{P^\pm(f(-q^2z)),f_0}_{q^{-1}} + \hs{(-q^2z)^{-1}f_1,P^\pm(f(-q^2z))\pm f_0}_{q^{-1}} }.
\end{split}
\eeq
\end{prop}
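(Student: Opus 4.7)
The plan is to derive the Gaussian coordinates $f_{13}^\pm(z)$ and $e_{31}^\pm(z)$ directly from the $RLL$ relations and the Gaussian decomposition \rf{Gauss}, and then to recognize the resulting expressions, once pushed through isomorphism \rf{Dr-RLL}, as the composite-current combinations stated in \rf{e31f13}.

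First I would extract $f_{13}^\pm(z)$ and $e_{31}^\pm(z)$ from the $RLL$ relations by examining those components that couple the $(1,3)$ entry of $L^\pm$ to the simpler $(1,2)$, $(2,3)$, $(3,1)$, $(3,2)$ entries. After substituting the Gaussian decomposition into these scalar relations, one can isolate $f_{13}^\pm(z)$ as a $q^{-1}$-commutator of $f_1^\pm(z)$ and $f_2^\pm(w)$ with an appropriate specialization of $w$ that kills the cross-contributions from the diagonal block. The analogous manipulation on the lower-triangular factor of \rf{Gauss} gives $e_{31}^\pm(z)$ as a $q^{-1}$-commutator of $e_2^\pm(w)$ and $e_1^\pm(z)$. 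This step is a strict analogue of the well-known derivation for $U_q(\widehat{sl}_n)$, only applied twice because the Gaussian coordinates of size~$3$ already suffice.

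Second, I would substitute the shift identities $f_2^\pm(z) = -q^{-1/2} f_1^\pm(-qz)$ and $e_2^\pm(z) = -q^{1/2} e_1^\pm(-qz)$ from \rf{e2f2}. This rewrites the Gaussian coordinates entirely in terms of the minimal currents $f_1^\pm$ and $e_1^\pm$ evaluated at $z$ and $-qz$. A rescaling of the spectral parameter then brings these arguments to the form $-q^2 z$ matching \rf{e31f13}.

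Third, I would invoke isomorphism \rf{Dr-RLL} to translate $f_1^\pm(z)$ and $e_1^\pm(z)$ into the Drinfeld language. The series $f_1^+(z)$ and $f_1^-(z)$ are the positive and negative halves (in $z^{-1}$) of $q^{1/4}(q-q^{-1})f(qz)$, so after the argument shift they become $P^\pm$ applied to $f(-q^2 z)$, up to normalization and the contributions of the boundary modes $f_0$ and $f_1$ that the projections $P^\pm$ of \rf{Pdef} do not treat symmetrically; the analogous statement for $e_1^\pm$ and $P^{*\pm}$ of \rf{Pdefa} involves $e_0$ and $e_{-1}$. Splitting the $q^{-1}$-commutators obtained at step~2 through the projection decomposition, and using that the $P^\pm$ do not commute with $q^{-1}$-commutators on the nose, generates precisely the correction terms visible in \rf{e31f13}; the residue that defines $s(z)$ and $r(z)$ appears naturally once the resulting series are reassembled into the full Drinfeld currents.

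The main obstacle will be the bookkeeping of $q$-factors, signs, and the precise correction terms generated when pushing the $q^{-1}$-commutators through the projections $P^\pm$ and $P^{*\pm}$. The twisted shift $-qz$ (rather than the simpler $qz$ seen in the untwisted $A_n^{(1)}$ case of \cite{KP}) is what forces the argument $-q^2 z$ to appear throughout \rf{e31f13}, and it is also what produces the asymmetric boundary combinations involving both $f_0, f_1$ and both $e_0, e_{-1}$ rather than a single boundary mode on each side; correctly threading this shift through the residue computations implicit in $s(z)$ and $r(z)$ is the delicate part of the argument.
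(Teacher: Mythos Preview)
Your overall plan---extract $f_{13}^\pm, e_{31}^\pm$ from the $RLL$ relations, apply the shift identities \rf{e2f2}, then translate via \rf{Dr-RLL}---is the same skeleton the paper uses. But the tactical execution differs at the first step, and this difference leaves a real gap. The paper does \emph{not} obtain $f_{13}^+(z)$ as a $q^{-1}$-commutator of $f_1^+$ and $f_2^+$ with a specialization of $w$: it uses the \emph{mixed} relation $R\,L_1^+L_2^-=L_2^-L_1^+R$ and sets $w=0$, picking out the single mode $f_1^-[0]$ from $L^-$. After commuting $k_3^+(z)$ past $f_1^-[0]$ one finds
\[
f_{13}^+(z)=\frac{1}{q-q^{-1}}\Bigl(q^{\frac12}f_1^+(-qz)\,f_1^-[0]-q^{-\frac12}f_1^-[0]\,f_1^+(-qz)\Bigr)-q^{-\frac12}\bigl(f_1^+(-qz)\bigr)^{2},
\]
which is \emph{not} a bare $q^{-1}$-commutator: there is an unavoidable quadratic term. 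The zero mode $f_0$ in \rf{e31f13} enters precisely because the paper pulls $f_1^-[0]$ out of $L^-$; your same-sign $RLL^{\pm\pm}$ route would instead produce a product $f_1^\pm(z)\,f_1^\pm(-qz)$ at two running arguments, from which the bare modes $f_0,f_1$ do not emerge by ``splitting through the projections'' alone.

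The second missing ingredient is how to dispose of the square $\bigl(f_1^+(-qz)\bigr)^{2}$, which on the Drinfeld side becomes $P^+\!\bigl(f(-q^2z)\bigr)^2$. The paper imports from \cite{KS} two formulas for $P\bigl(f(z_1)f(z_2)\bigr)$, equates them, and lets $z_2\to\infty$ to obtain
\[
(q-q^{-1})P(f(z))^2=\bigl[f_1,P(f(z))\bigr]_{q^{-1}}z^{-1}+\tfrac{1+q^3}{q(1+q)}P(s(z))+\bigl[f_1,f_0\bigr]_{q^{-1}}z^{-1},
\]
and it is this identity that converts the square into the $q^{-1}$-commutator combinations with $f_0$ and $f_1$ that appear in \rf{e31f13}. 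Your outline does not name this step. Finally, note that $s(z)$ and $r(z)$ do not appear in the statement of this proposition at all---they enter only in the subsequent Theorem~\rfs{theor}---so your remark that ``the residue that defines $s(z)$ and $r(z)$ appears naturally'' conflates the proposition with that later result.
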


\begin{proof}
We consider only the case of $f_{13}^+(z)$ because the other cases are similar.
Relations \rf{RLL} imply
\begin{multline*}
(q^2-1)z(q^3z+w)l_{13}^+(z)l_{22}^-(w) + q(z-w)(q^3z+w)l_{23}^+(z)l_{12}^-(w) = \\ =
(q^2-1)w(q^3z+w)l_{13}^-(w)l_{22}^+(z) + q(z-w)(q^3z+w)l_{12}^-(w)l_{23}^+(z).
\end{multline*}
Assuming $w=0$, we get
$$
(q-q^{-1})l_{13}^+(z) = \hs{l_{12}^-(0),l_{23}^+(z)},
$$
or, equivalently,
$$
(q-q^{-1})f_{13}^+(z)k_3^+(z) = f_1^-[0]f_2^+(z)k_3^+(z) - f_2^+(z)k_3^+(z)f_1^-[0].
$$
Using \rf{RLL} once again, we obtain
$$
k_3^+(z)f_1^-[0] = qf_1^-[0]k_3^+(z) + (q-q^{-1})q^{\frac12}f_2^+(z)k_3^+(z),
$$
and therefore derive
$$
f_{13}^+(z) = \frac1{q-q^{-1}}\hr{q^\frac12 f_1^+(-qz)f_1^-[0] - q^{-\frac12}f_1^-[0]f_1^+(-qz)} - q^{-\frac12}f_1^+(-qz)^2.
$$

Now, let us recall a result from \cite{KS}: on one hand, we have
\begin{multline*}
P(f(z_1)f(z_2))= f^+(z_1)\hr{f^+(z_2) - \dfrac{q^2-1}{q^2-z_2/z_1}f^+(z_1)} - \\ -
\dfrac{(1+q^3)(1-z_2/z_1)}{(1+q)(q^2-z_2/z_1)(1+qz_2/z_1)}P(s(z_1)),
\end{multline*}
and, on the other hand,
$$
P(f(z_1)f(z_2)) = \frac{(q^2-z_1/z_2)(q^{-1}+z_1/z_2)}{(1-q^2z_1/z_2)(1+q^{-1}z_1/z_2)}P(f(z_2)f(z_1)).
$$
Equating the right-hand sides of these expressions, multiplying them by $z_2$, and letting $z_2$ tend to infinity, we obtain
$$
(q-q^{-1)}P(f(z))^2 = \hr{f_1P(f(z))-q^{-1}P(f(z))f_1}z^{-1} + \frac{1+q^3}{q(1+q)}P(s(z)) + \hr{f_1f_0-q^{-1}f_0f_1}z^{-1}.
$$
Finally, since isomorphism \rf{Dr-RLL} maps $f_1^\pm(-qz)$ to $\pm q^{\frac14}(q-q^{-1})P^\pm(f(-q^2z))$, we derive the desired formula \rf{e31f13}.
\end{proof}

\begin{theor} \label{theor}
Isomorphism \rf{Dr-RLL} links the Gaussian coordinates $f_{13}^+(z)$, $e_{31}^+(z)$ and the composite Drinfeld currents $s(z)$, $r(z)$ by the following expressions:
\beq
\begin{split}
P^+(s(-q^2z))    &= \frac{1}{(q-q^{-1})(q-1+q^{-1})}f_{13}^+(z), \\
P^-(s(-q^2z))    &= \frac{1}{(q-q^{-1})(q-1+q^{-1})}\hr{f_1^-(z)f_2^-(z) - f_{13}^-(z)}, \\
P^{*+}(r(-q^2z)) &= \frac{1}{(q-q^{-1})(q-1+q^{-1})}e_{31}^+(z), \\
P^{*-}(r(-q^2z)) &= \frac{1}{(q-q^{-1})(q-1+q^{-1})}\hr{e_2^-(z)e_1^-(z) - e_{31}^-(z)}.
\end{split}
\eeq
\end{theor}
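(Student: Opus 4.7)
The plan is to combine the explicit formula for $f_{13}^+(z)$ established in the proof of Proposition \rfs{gauss_currents} with the identity derived there:
\[
(q-q^{-1})P(f(z))^2 = z^{-1}[f_1, P(f(z))]_{q^{-1}} + (q-1+q^{-1})P(s(z)) + z^{-1}[f_1, f_0]_{q^{-1}},
\]
where the coefficient $q-1+q^{-1}$ arises as the simplification of $\frac{1+q^3}{q(1+q)}$ and the relation holds for both projections $P=P^\pm$. This single identity will be the main engine of the proof.

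For the first equality of the theorem, I solve the above identity for $(q-q^{-1})(q-1+q^{-1})P^+(s(z))$, substitute $z \mapsto -q^2 z$, and translate the result into $RLL$ generators using \rf{Dr-RLL}: under this isomorphism $f_1^+(-qz)$ corresponds to $q^{1/4}(q-q^{-1})P^+(f(-q^2z))$, while $f_0$ and $f_1$ are identified, up to normalising powers of $q$, with $f_1^-[0]$ and $f_1^+[1]$ respectively. The resulting expression is then matched against
\[
f_{13}^+(z) = \frac{1}{q-q^{-1}}\left(q^{1/2}f_1^+(-qz)f_1^-[0] - q^{-1/2}f_1^-[0]f_1^+(-qz)\right) - q^{-1/2}f_1^+(-qz)^2,
\]
which was obtained in the proof of Proposition \rfs{gauss_currents}, and the comparison yields the first identity.

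The second identity requires the analogue of the preceding explicit formula for $f_{13}^-(z)$. I would obtain this by repeating the derivation starting from an $RLL$ relation in the family \rf{RLL} that isolates $l_{13}^-$, for instance the matrix element of $R(z/w)L_1^+(z)L_2^-(w) = L_2^-(w)L_1^+(z)R(z/w)$ with the roles of the $(+,-)$ factors exchanged. The essential new feature is that the analogue of the quadratic term $f_1^+(-qz)^2$ becomes a product $f_1^-(z)f_1^-(-qz)$ which, by \rf{e2f2}, is proportional to $f_1^-(z)f_2^-(z)$; this explains the extra summand on the right-hand side of the second identity. The third and fourth identities follow by the same route applied to $e_{31}^\pm(z)$ and the current $r(z)$, either by a parallel direct computation with $e$ and $f$ interchanged, or by invoking the involution $\iota$ from the remark, which yields $r(z) = \iota(-\tilde s(z^{-1}))$ and transports the $f$-side result to the $e$-side. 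The main technical obstacle I anticipate is the careful bookkeeping of the fractional powers of $q$ and signs introduced by the shift $z \mapsto -q^2 z$ together with the normalisations in \rf{Dr-RLL}, and in particular verifying that the quadratic corrections in the $P^-$ and $P^{*-}$ cases come out with exactly the right coefficient to produce $f_1^-(z)f_2^-(z)$ and $e_2^-(z)e_1^-(z)$ as stated.
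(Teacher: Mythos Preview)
Your plan for the first identity is exactly the paper's argument unwound: combine the intermediate expression for $f_{13}^+(z)$ from the proof of Proposition~\ref{gauss_currents} with the identity involving $P^+(s(z))$ derived there. Likewise, handling $r(z)$ via the involution $\iota$ is precisely what the paper does. So on those counts your proposal coincides with the paper's proof.

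The gap is in your treatment of the $P^-$ case. You propose to re-derive an intermediate formula for $f_{13}^-(z)$ from a new $RLL$ relation ``with the roles of the $(+,-)$ factors exchanged'', and then speculate that the quadratic term will become $f_1^-(z)f_1^-(-qz)$, hence proportional to $f_1^-(z)f_2^-(z)$. Two problems: first, there is no $RLL$ relation with $(+,-)$ swapped---relations~\rf{RLL} are all there is, so you would have to extract $l_{13}^-$ differently (e.g.\ by specialising the other variable, or from the $L^-L^-$ relation), and it is not at all clear the outcome has the shape you predict. Second, and more to the point, the formula for $f_{13}^-(z)$ is \emph{already} contained in Proposition~\ref{gauss_currents}: equation~\rf{e31f13} is stated with $\pm$ signs and covers both cases. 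Your route discards this and replaces it with a speculative computation.

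The paper avoids this entirely by invoking Theorem~1 of \cite{KS}, which supplies closed formulas for $P^\pm(s(z))$ that can be compared directly with~\rf{e31f13}. That is where the combination $f_1^-(z)f_2^-(z)-f_{13}^-(z)$ actually comes from, not from a new $RLL$ manipulation. If you want a self-contained argument, the correct move is to take~\rf{e31f13} for the minus sign and match it against the $P^-$ version of the identity you quoted, rather than re-deriving a Gauss-coordinate formula from scratch.
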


\begin{proof}
The statement for $s(z)$ follows directly from Proposition \rfs{gauss_currents} and Theorem~1 in \cite{KS}. For $r(z)$ the involution $\iota$ should also be used.
\end{proof}

\begin{rem}
Since $f_1^-(z)f_2^-(z) - f_{13}^-(z)$ and $e_2^-(z)e_1^-(z) - e_{31}^-(z)$ are the corner elements in the Gaussian decomposition of the inverse $L$-operator $\hr{L^-(z)}^{-1}$, it might make sense to use the elements of the decomposition of the inverse $L$-operator $\hr{L^-(z)}^{-1}$ as the Gaussian coordinates instead of the elements of the decomposition of usual $L$-operator $L^-(z)$.
\end{rem}

\section*{Acknowledgements}
The author thanks Sergey Khoroshkin for the invaluable help and his constant interest to the work. The author is also grateful to Stanislav Pakuliak for many fruitful discussions. The author was supported by RFBR grant 08-01-00667, joint CNRS-RFBR grant 09-01-93106-NCNIL, and by Federal Agency for Science and Innovations of Russian Federation under contract 14.740.11.0081.

\end{document}